\newcommand{\inlineitem}[1][]{%
	\ifnum\enit@type=\tw@
	{\descriptionlabel{#1}}
	\hspace{\labelsep}%
	\else
	\ifnum\enit@type=\z@
	\refstepcounter{\@listctr}\fi
	\quad\@itemlabel\hspace{\labelsep}%
	\fi} \makeatother
\newcommand{\ga}{\alpha}
\newcommand{\Gs}{\Sigma}
\newcommand{\Gom}{\Omega}
\newcommand{\subs}{\subset}
\newcommand{\bs}{\backslash}
\newcommand{\nin}{\notin}
\newcommand{\ti}{\tilde}
\newcommand{\mbb}{\mathbb}
\newcommand{\us}{\underset}
\newcommand{\lra}{\longrightarrow}
\newcommand{\Ra}{\Rightarrow}
\newcommand{\equ}[1]{%
	\begin{equation*}
		#1
	\end{equation*}
}
\newcommand{\equa}[1]{%
	\begin{equation*}
		\begin{aligned}
			#1
		\end{aligned}
	\end{equation*}
}
\newcommand{\rectangle}{{%
		\ooalign{$\sqsubset\mkern3mu$\cr$\mkern3mu\sqsupset$\cr}%
}}
\theoremstyle{plain}
\newtheorem{theorem}{Theorem}[section]
\newtheorem{lemma}[theorem]{Lemma}
\newtheorem{cor}[theorem]{Corollary}
\newtheorem{ques}[theorem]{Question}
\newtheorem{note}[theorem]{Note}
\newtheorem{notation}[theorem]{Notation}
\def\namedlabel#1#2{\begingroup
	\def\@currentlabel{#2}%
	\label{#1}\endgroup
}
\newtheorem*{thmOmega}{\bf{Theorem} $\bm{\Gom}$}
\theoremstyle{definition}
\newtheorem{defn}[theorem]{Definition}
\theoremstyle{remark}
\numberwithin{equation}{section}
\begin{document}

\title[On the Factorization of Two Adjacent Numbers]{On the Factorization of Two Adjacent Numbers in Multiplicatively Closed Sets Generated by Two 
Elements}
\author[C.P. Anil Kumar]{C.P. Anil Kumar}
\address{No. 104, Bldg. 23, Lakshmi Paradise, 5th Main, 11th Cross, LN Puram, Bengaluru-560021}
\email{akcp1728@gmail.com}
\subjclass[2010]{Primary: 11A55, Secondary: 11K60} 
\keywords{Multiplicatively Closed Sets, Continued Fractions, Primary and Secondary Convergents}
\date{\sc \today}
\begin{abstract}
For two natural numbers $1<p_1<p_2$, with $\ga=\frac{\log(p_1)}{\log(p_2)}$ irrational, we describe, in main Theorem~\ref{theorem:FactRectangles} and in Note~\ref{note:AdjFact}, the factorization of two adjacent numbers in the multiplicatively closed subset $S=\{p_1^ip_2^j\mid i,j\in \mbb{N}\cup\{0\}\}$ using primary and secondary convergents of $\ga$. This suggests general Question~\ref{ques:AdjFact} for more than two generators which is still open. 
\end{abstract}
\maketitle
\section{\bf{Introduction}}
Continued fractions have been studied extensively in the theory of diophantine approximation. More so as a tool to prove results in this theory, for example, Hurwitz's theorem. A basic introduction to the theory of continued fractions is given in~\cite{MR0001185},~\cite{MR1083765}. A proof of Hurwitz's theorem is also mentioned in ~\cite{MR1083765} (Chapter $7$). The following is the question which this article concerns and we answer the question using the theory of continued fractions as a tool. 

\begin{ques}
\label{ques:AdjacentNumberFactorization}
Let $S=\{1=s_0<s_1<s_2<\cdots\}\subs \mbb{N}$ be a multiplicatively closed set generated by two natural numbers $p_1<p_2$ such that $\frac{\log(p_1)}{\log(p_2)}$ is irrational.
Let $s_k=p_1^ip_2^j\in S$ for some $k\geq 0$. What are the factorizations of the adjacent numbers $s_{k-1},s_{k+1}$ in terms of $p_1,p_2,i,j$?
\end{ques}
We answer this Question~\ref{ques:AdjacentNumberFactorization} using the simple continued fraction expansion of $\frac{\log(p_1)}{\log(p_2)}$ and its primary and secondary convergents.
In C.~P.~Anil~Kumar~\cite{MR3943663} (Theorem $3.2$), a question on the existence of arbitrary large gaps in $S$ has been answered affirmatively by another constructive technique. Also in Article~\cite{MR3943663} ( Lemma $5.4$ and Note $5.5$) a formula for the next number of $p_2^j$ in the multiplicatively closed set $S$ has been found. Here we prove this result as Corollary~\ref{cor:ArbLargeInt} of main Theorem~\ref{theorem:FactRectangles}.  

The following question for more than two generators is still open. 
\begin{ques}
\label{ques:AdjFact}
Let $T=\{1=t_0<t_1<t_2<\cdots\}\subs \mbb{N}$ be a finitely generated multiplicatively closed 
infinite set generated by positive integers $d_1,d_2,\cdots,d_n$ for $n>2$. Let $t_k=d_1^{i_1}d_2^{i_2}\cdots d_n^{i_n}$. How do we construct an explicit 
factorization of the elements $t_{k-1},t_{k+1}\in T$ in terms of the positive integers $d_j,i_j, 1\leq j\leq n$?
\end{ques}
Now we proceed to mention some notation, a required definition and state main Theorem~\ref{theorem:FactRectangles}.
\begin{notation}
Throughout this article, let $0<p_1<p_2$ be two positive integers such that $\frac {\log p_1}{\log p_2}$ is irrational. Let $S=\{p_1^ip_2^j\mid i,j\in \mbb{N}\cup \{0\}\}=\{s_0=1<s_1<s_2<\cdots\}$ be the multiplicatively closed set generated by $p_1,p_2$.	
\end{notation}
\begin{defn}[Non-negative Integer Co-ordinates of an Element]
	Any integer $n\in S$ can be uniquely expressible as $n=p_1^ip_2^j$. We associate the non-negative integer pair $(i,j)$ to $n$ which are the integer co-ordinates of the element $n$.
	So in particular there is a bijection (co-ordinatisation map) of $S$ with the grid $(\mbb{N}\cup \{0\})\times (\mbb{N}\cup \{0\})$.
\end{defn}
Main Theorem~\ref{theorem:FactRectangles} gives the decomposition of the factorization grids
\equa{&(\mbb{N}\cup \{0\})\times (\mbb{N}\cup \{0\})\text{ and }\\
	&\big((\mbb{N}\cup \{0\})\times (\mbb{N}\cup \{0\})\big)^{*}=\big(\mbb{N}\cup\{0\}\big)\times \big(\mbb{N}\cup\{0\}\big)\bs\{(0,0)\}} 
into rectangles which are related by local translations to describe the factorization of the number and its next number in an elegant manner. Now we state the main theorem.
\begin{thmOmega}
	\namedlabel{theorem:FactRectangles}{$\Gom$}
	Let $\{a_0=0,a_1,a_2,\cdots,\}$ be the continued fraction of $\frac {\log p_1}{\log p_2}$. Let $h_0=0,k_0=1$ and let $\{\frac{h_i}{k_i}\mid i\in \mbb{N},gcd(h_i,k_i)=1\}$ 
	be the sequence of primary convergents of $\frac {\log p_1}{\log p_2}$. 
	Consider the integer grid rectangles $\rectangle A^t_iB^t_iC^t_iD^t_i$ for $i\geq 1$ of dimensions $h_{2i}\times k_{2i}$ with co-ordinates given by 
	\equa{	A^t_i&=\big(k_{2i-1}+tk_{2i},0\big),\\
			B^t_i&=\big(k_{2i-1}+(t+1)k_{2i}-1,0\big),\\
			C^t_i&=\big(k_{2i-1}+(t+1)k_{2i}-1,h_{2i}-1\big),\\
			D^t_i&=\big(k_{2i-1}+tk_{2i}, h_{2i}-1\big), 0\leq t< a_{2i+1}.}
	The corresponding translated rectangles (translation applied to each point) denoted by $\rectangle \ti{A^t_i}\ti{B^t_i}\ti{C^t_i}\ti{D^t_i}$ of next numbers in the multiplicatively closed set are given by 
	\equa{	\ti{A^t_i}&=\big(0,h_{2i-1}+th_{2i}\big),\\
			\ti{B^t_i}&=\big(k_{2i}-1,h_{2i-1}+th_{2i}\big),\\
			\ti{C^t_i}&=\big(k_{2i}-1,h_{2i-1}+(t+1)h_{2i}-1\big),\\
			\ti{D^t_i}&=\big(0,h_{2i-1}+(t+1)h_{2i}-1\big),0\leq t< a_{2i+1}} 
	again of the same dimensions $h_{2i}\times k_{2i}$ with translation given by \equ{\rectangle\ti{A^t_i}\ti{B^t_i}\ti{C^t_i}\ti{D^t_i}=\rectangle A^t_iB^t_iC^t_iD^t_i+\big(-k_{2i-1}-tk_{2i},h_{2i-1}+th_{2i}\big).}
	
	Now consider the integer grid rectangles $\rectangle P^t_iQ^t_iR^t_iS^t_i$ for $i\geq 0$ of dimensions $h_{2i+1} \times k_{2i+1}$ with co-ordinates given by 
	\equa{	P^t_i&=\big(0,h_{2i}+th_{2i+1}\big),\\
			Q^t_i&=\big(k_{2i+1}-1,h_{2i}+th_{2i+1}\big),\\
			R^t_i&=\big(k_{2i+1}-1,h_{2i}+(t+1)h_{2i+1}-1\big),\\
			S^t_i&=\big(0,h_{2i}+(t+1)h_{2i+1}-1\big), 0\leq t< a_{2i+2}.}
	The corresponding translated rectangles (translation applied to each point) denoted by $\rectangle \ti{P^t_i}\ti{Q^t_i}\ti{R^t_i}\ti{S^t_i}$ of next numbers in the multiplicatively closed set are given by 
	\equa{	\ti{P^t_i}&=\big(k_{2i}+tk_{2i+1},0\big),\\
			\ti{Q^t_i}&=\big(k_{2i}+(t+1)k_{2i+1}-1,0\big)\\
			\ti{R^t_i}&=\big(k_{2i}+(t+1)k_{2i+1}-1,h_{2i+1}-1\big),\\
			\ti{S^t_i}&=\big(k_{2i}+tk_{2i+1},h_{2i+1}-1\big), 0\leq t< a_{2i+2}}
	again of the same dimensions $h_{2i+1}\times k_{2i+1}$
	with translation given by \equ{\rectangle\ti{P^t_i}\ti{Q^t_i}\ti{R^t_i}\ti{S^t_i}=\rectangle P^t_iQ^t_iR^t_iS^t_i+\big(k_{2i}+tk_{2i+1},-h_{2i}-th_{2i+1}\big).}
	
	Also we have the grid
	\equa{\big(\mbb{N}\cup\{0\}\big)\times \big(\mbb{N}\cup\{0\}\big) &=\\
&\us{i\geq 1}{\bigsqcup}\bigg( \us{0\leq t< a_{2i+1}}{\bigsqcup} \rectangle A^t_iB^t_iC^t_iD^t_i \bigg)  
		\us{i\geq 0}{\bigsqcup}\bigg( \us{0\leq t< a_{2i+2}}{\bigsqcup} \rectangle P^t_iQ^t_iR^t_iS^t_i \bigg)}
	and the grid of next numbers (hence origin deleted as next number cannot be origin)
	\equa{\big(\mbb{N}\cup\{0\}\big)&\times \big(\mbb{N}\cup\{0\}\big)\bs\{(0,0)\} =\\
		&\us{i\geq 1}{\bigsqcup }\bigg( \us{0\leq t< a_{2i+1}}{\bigsqcup} \rectangle \ti{A^t_i}\ti{B^t_i}\ti{C^t_i}\ti{D^t_i} \bigg) 
		\us{i\geq 0}{\bigsqcup }\bigg( \us{0\leq t< a_{2i+2}}{\bigsqcup} \rectangle \ti{P^t_i}\ti{Q^t_i}\ti{R^t_i}\ti{S^t_i} \bigg).}
\end{thmOmega}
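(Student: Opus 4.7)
The plan is to reduce to a lattice-point optimization for the linear form $\phi(i,j)=i\alpha+j$, identify the candidate minimizers via the classical continued-fraction theory of best approximations, and then read off the rectangular decomposition from their feasibility regions.

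\emph{Reduction.} Taking logarithms to base $p_2$, the order on $S$ matches the order on $(\mbb{N}\cup\{0\})^2$ induced by $\phi$, where $\alpha=\log p_1/\log p_2$. Hence if $s_k=p_1^ip_2^j$ and $s_{k+1}=p_1^{i'}p_2^{j'}$, the displacement $(\Delta i,\Delta j)=(i'-i,j'-j)$ is the unique integer vector minimizing $\Delta i\cdot\alpha+\Delta j$ subject to $\Delta i\cdot\alpha+\Delta j>0$, $\Delta i\geq -i$, $\Delta j\geq -j$. By Khinchin's classical best-approximation theory~\cite{MR0001185}, the only integer vectors that can realise this minimum under any one-sided feasibility constraint fall into two families: the \emph{right-down} vectors $(k_{2m}+tk_{2m+1},\,-h_{2m}-th_{2m+1})$, $m\geq 0$, $0\leq t<a_{2m+2}$, with value $(k_{2m}+tk_{2m+1})\alpha-(h_{2m}+th_{2m+1})>0$ strictly decreasing in the lexicographic order on $(m,t)$ (these exhaust the record-small values of $\{q\alpha\}$, i.e., approximations from below by primary and secondary convergents), and the \emph{left-up} vectors $(-k_{2m-1}-tk_{2m},\,h_{2m-1}+th_{2m})$, $m\geq 1$, $0\leq t<a_{2m+1}$, with value $(h_{2m-1}+th_{2m})-(k_{2m-1}+tk_{2m})\alpha>0$ likewise strictly decreasing (record-small values of $1-\{q\alpha\}$, approximations from above).

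\emph{Assignment.} A right-down candidate with parameters $(m,t)$ is feasible at $(i,j)$ iff $j\geq h_{2m}+th_{2m+1}$ (the $\Delta i\geq -i$ constraint is automatic since $\Delta i\geq 0$), so it is optimal among all feasible right-down candidates exactly on the horizontal strip $j\in[h_{2m}+th_{2m+1},\,h_{2m}+(t+1)h_{2m+1}-1]$. Symmetrically, a left-up candidate with parameters $(m,t)$ is feasible iff $i\geq k_{2m-1}+tk_{2m}$, and optimal among left-up candidates on the vertical strip $i\in[k_{2m-1}+tk_{2m},\,k_{2m-1}+(t+1)k_{2m}-1]$. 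Comparing the optimal right-down and left-up values at each grid point, using the determinantal identity $|h_nk_{n+1}-h_{n+1}k_n|=1$ and the interlacing of even/odd convergents, one finds that right-down wins overall precisely when in addition $i\leq k_{2m+1}-1$, carving out $\rectangle P^t_mQ^t_mR^t_mS^t_m$ with image $\rectangle\ti{P^t_m}\ti{Q^t_m}\ti{R^t_m}\ti{S^t_m}$ under the winning translation, and left-up wins precisely when in addition $j\leq h_{2m}-1$, carving out $\rectangle A^t_mB^t_mC^t_mD^t_m$ with image $\rectangle\ti{A^t_m}\ti{B^t_m}\ti{C^t_m}\ti{D^t_m}$; these are exactly the rectangles in the statement.

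\emph{Tiling.} Disjointness and exhaustion of the two rectangular families over $(\mbb{N}\cup\{0\})^2$ reduce to a telescoping check using the recurrences $k_{n+1}=a_{n+1}k_n+k_{n-1}$ and $h_{n+1}=a_{n+1}h_n+h_{n-1}$; the translated rectangles cover $(\mbb{N}\cup\{0\})^2\setminus\{(0,0)\}$ since every non-origin lattice point is the successor of a unique predecessor. The main obstacle is the inter-family comparison in the assignment step: proving that crossing the boundary $j=h_{2m}$ (respectively $i=k_{2m+1}$) is exactly what switches the winner from left-up (respectively right-down) to the opposite family. This requires a careful estimate of semiconvergent values against one another and a case analysis on how the optimal right-down and left-up indices shift as $(i,j)$ moves across the boundary, and it is where most of the combinatorial work of the proof sits.
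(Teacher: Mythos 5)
Your reduction to minimizing $\Delta i\cdot\alpha+\Delta j$ over positive feasible displacements, and the classification of candidates into right-down and left-up families via one-sided best approximations by primary and secondary convergents, is correct and parallels what the paper develops in Section~2 (Theorems~\ref{theorem:UpLowSequence}, \ref{theorem:UpLowSeqDifferences} and~\ref{theorem:FracPart}, which it proves from scratch rather than citing; incidentally, the ``Khinchin'' material is not in~\cite{MR0001185}). However, the step you explicitly flag as ``the main obstacle \ldots where most of the combinatorial work of the proof sits'' --- locating the crossover boundary, i.e.\ proving that right-down wins iff $i\le k_{2m+1}-1$, that left-up wins iff $j\le h_{2m}-1$, and that these two descriptions (indexed respectively by the $j$-strip and the $i$-strip containing the point) are complementary --- is precisely the substantive content of Theorem~$\Gom$, and you leave it unproved. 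That is a genuine gap: everything else in the proposal is setup.

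For comparison, the paper's proof avoids a head-to-head ``winner'' comparison between the two families. It fixes a rectangle point $(k_{2i-1}+tk_{2i}+r,s)$ together with its claimed successor $(r,h_{2i-1}+th_{2i}+s)$, supposes an intermediate $p_1^bp_2^a$, and derives a contradiction by cases $b=r$, $b<r$, $b>r$: in each nontrivial case the implied (ceiling or floor) fractional part of the displacement would be strictly smaller than the minimum attainable for its denominator range, by Theorem~\ref{theorem:FracPart}, unless $r$ or $s$ is forced outside the stated rectangle bounds. This verification-by-contradiction only has to confirm that the stated boundaries are sharp rather than derive them, but the arithmetic it rests on is essentially the lemma your deferred inter-family comparison would require --- so the work you postpone cannot be avoided by the reformulation, and your proposal as written is incomplete at exactly that point.
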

Here in the following note we mention briefly how to get the factorization of the previous number of $s_k\in S$.
\begin{note}
	\label{note:AdjFact}
	Using Theorem~\ref{theorem:FactRectangles} the factorization of the previous number can be obtained because previous number of the next number of a number is the given number. The answer to Question~\ref{ques:AdjacentNumberFactorization} can be obtained by suitably expressing $(i,j)$. 
	
	For the next number express
	$s_k=(i,j)\in (\mbb{N}\cup \{0\})\times (\mbb{N}\cup \{0\})$ as $(k_{2l-1}+tk_{2l}+r,s)$ with $0\leq t<a_{2l+1},0\leq r<k_{2l},0\leq s<h_{2l}$ or as 
	$(r,h_{2l}+th_{2l+1}+s)$ with $0\leq t<a_{2l+2},0\leq r<k_{2l+1},0\leq s<h_{2l+1}$. We get the next number $s_{k+1}$ using Theorem~\ref{theorem:FactRectangles}.
	
	For the previous number express $s_{k}=(i,j)\in (\mbb{N}\cup \{0\})\times (\mbb{N}\cup \{0\})\bs\{0,0\}$ as $(r,h_{2l-1}+th_{2l}+s)$ with $0\leq t<a_{2l+1},0\leq r<k_{2l},0\leq s<h_{2l}$
	or as $(r+k_{2l}+tk_{2l+1},s)$ with $0\leq t<a_{2l+2},0\leq r<k_{2l+1},0\leq s<h_{2l+1}$. We get the previous number $s_{k-1}$ again using Theorem~\ref{theorem:FactRectangles} in the reverse manner.
\end{note}
\section{\bf{Types of Fractions and Convergents associated to an Irrational in $[0,1]$}}
In this section we define various types of fractions and convergents associated to an irrational $\ga \in [0,1]$.

\subsection{Primary and Secondary Convergents}
\begin{defn}[Primary and Secondary Convergents]
Let $\ga \in [0,1]$ be an irrational. Suppose $\{a_0=0,a_1,a_2,a_3,\cdots,\}$ be the sequence denoting the simple continued fraction expansion of $\ga$, i.e.,
\equ{\ga=a_0+\frac 1{a_1+\frac 1{a_2+\frac 1{a_3+ \frac 1{\cdots}}}}.}
Let $h_0=0,k_0=1$. Define \equ{\frac{h_i}{k_i}=a_0+\frac 1{a_1+\frac 1{\ddots+\frac 1{a_i}}}, gcd(h_i,k_i)=1 \text{ for } i\in \mbb{N}.}
Then an element in the sequence $\{\frac{h_i}{k_i}: i\in \mbb{N}\cup\{0\}\}$ is called a primary convergent. 
The first few primary convergents with relatively prime numerators and denominators are given by 
\equ{\frac 01,\frac 1{a_1},\frac{a_2}{1+a_1a_2},\frac{1+a_2a_3}{a_3+a_1+a_1a_2a_3},\frac{a_2+a_4+a_2a_3a_4}{1+a_3a_4+a_1a_2+a_1a_4+a_1a_2a_3a_4},\cdots.}
By induction, we can show, with these expressions for $\frac{h_i}{k_i}$, that $h_ik_{i+1}-k_ih_{i+1}=\pm 1,i\in \mbb{N}\cup\{0\}$ as polynomials. Also we have as polynomials, 
\equ{h_{i+2}=a_{i+2}h_{i+1}+h_i, k_{i+2}=a_{i+2}k_{i+1}+k_i.}
So we actually have polynomial expressions for $h_i,k_i$ in terms of variables $a_i:i\in \mbb{N}\cup\{0\}$ arising from continued fraction of the irrational $\ga$. For any irrational $\ga$
the convergents satisfy \equ{\frac{h_{2j}}{k_{2j}}<\frac{h_{2j+2}}{k_{2j+2}}<\ga < \frac{h_{2l+1}}{k_{2l+1}}<\frac{h_{2l-1}}{k_{2l-1}}, j\in \mbb{N}\cup \{0\}, l\in \mbb{N}.}

Now we define the finite monotonic sequences of new intermediate fractions with relatively prime numerators and denominators given by 
\equ{\frac{h_{2j}}{k_{2j}}<\frac{h_{2j}+th_{2j+1}}{k_{2j}+tk_{2j+1}}<\frac{h_{2j+2}}{k_{2j+2}}, 0<t<a_{2j+2}, t,j\in \mbb{N}\cup\{0\}}
and 
\equ{\frac{h_{2l+1}}{k_{2l+1}}<\frac{h_{2l-1}+th_{2l}}{k_{2l-1}+tk_{2l}}<\frac{h_{2l-1}}{k_{2l-1}}, 0<t<a_{2l+1}, t,l\in \mbb{N}.}
These new intermediate fractions are called secondary convergents.
\end{defn}

\subsection{Upper and Lower Fractions}
Here we define two sequences of fractions called upper and lower fractions associated to an irrational $\ga\in [0,1]$.

\begin{defn}[Upper and and Lower Fractions]
Let $0<\ga<1$. For $n\in \mbb{N}$, let \equ{f(n)=\bigg\lceil \frac n{\ga} \bigg\rceil, g(n)=\bigg\lfloor \frac n{\ga} \bigg\rfloor.} 
Then the sequences $\{f(n):n\in \mbb{N}\},\{g(n):n\in \mbb{N}\}$ are called upper and lower
sequences of $\ga$ respectively. We have \equ{g(n)\ga < n < f(n)\ga, f(n)-g(n)=1, n\in \mbb{N}.} Since $0<\ga<1$ for any $n\in \mbb{N}$, we also observe that 
\equ{\lfloor f(n)\ga \rfloor = n, \lceil g(n)\ga \rceil = n.} 
A fraction in the sequence $\{\frac n{f(n)}:n\in \mbb{N}\}$ is called a lower fraction associated to $\ga$ and a fraction in the sequence $\{\frac n{g(n)}:n\in \mbb{N}\}$ is called an upper fraction
associated to $\ga$. We need not in general have $gcd(n,f(n))=1$ or $gcd(n,g(n))=1$. However we definitely have \equ{\frac {n}{f(n)}<\ga < \frac{n}{g(n)}.}
\end{defn}
\begin{note}
An element in the upper sequence gives a lower fraction and an element in the lower sequence gives an upper fraction associated to $\ga$.
\end{note}
\subsection{The Upper and Lower Sequences $f$ and $g$}
Here in this section we prove Theorems~\ref{theorem:UpLowSequence}~\ref{theorem:UpLowSeqDifferences} concerning the values of the upper and lower sequences. 
\begin{theorem}
\label{theorem:UpLowSequence}
Let $\ga \in [0,1]$ be an irrational with continued fraction expansion $\{a_0=0,a_1,a_2,\cdots\}$. 
Let $\{f(n):n\in \mbb{N}\},\{g(n):n\in \mbb{N}\}$ be the upper and lower sequences of $\ga$.
Let $h_0=0,k_0=1$. For $i\in \mbb{N}$, let $h_i,k_i$ be the numerator and denominator of $i^{th}$ primary convergent which are relatively prime.
Then we have 
\equa{f(h_{2j}+th_{2j+1})&=k_{2j}+tk_{2j+1},\\
g(h_{2j}+th_{2j+1})&=k_{2j}+tk_{2j+1}-1, 0<t\leq a_{2j+2},j\in \mbb{N}\cup\{0\},t\in \mbb{N}\\
g(h_{2l-1}+th_{2l})&=k_{2l-1}+tk_{2l},\\
f(h_{2l-1}+th_{2l})&=k_{2l-1}+tk_{2l}+1,0 \leq t \leq a_{2l+1},t\in \mbb{N}\cup\{0\},l\in \mbb{N}.}  
\end{theorem}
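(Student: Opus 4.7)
\textbf{Proof plan for Theorem~\ref{theorem:UpLowSequence}.} Set $\delta_i := k_i\alpha - h_i$ for $i \geq 0$. The bracketing of $\alpha$ by convergents recalled in the excerpt yields $\delta_{2j} > 0$ and $\delta_{2l-1} < 0$, while the recursions $h_{i+2} = a_{i+2}h_{i+1} + h_i$ and $k_{i+2} = a_{i+2}k_{i+1} + k_i$ give $\delta_{i+2} = \delta_i + a_{i+2}\delta_{i+1}$; the determinant identity $h_{i+1}k_i - h_ik_{i+1} = (-1)^i$ rearranges to $k_{i+1}\delta_i - k_i\delta_{i+1} = (-1)^i$, from which one reads off $|\delta_i| < 1/k_{i+1}$.

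The core reduction is this. Setting $n := h_i + t h_{i+1}$ and $m := k_i + t k_{i+1}$, one has $m\alpha - n = \delta_i + t\delta_{i+1}$. Because $\alpha$ is irrational, $n/\alpha$ is non-integer, so the paired equalities $f(n) = m$ and $g(n) = m-1$ are together equivalent to $0 < m\alpha - n < \alpha$, while $f(n) = m+1$ and $g(n) = m$ are together equivalent to $-\alpha < m\alpha - n < 0$. The theorem therefore reduces to showing $\Phi(t) := \delta_{2j} + t\delta_{2j+1} \in (0,\alpha)$ for $0 < t \leq a_{2j+2}$, and $\Psi(t) := \delta_{2l-1} + t\delta_{2l} \in (-\alpha,0)$ for $0 \leq t \leq a_{2l+1}$.

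I would then argue by parity. Since $\delta_{2j+1} < 0$, the affine function $\Phi$ is strictly decreasing on $[0,a_{2j+2}]$, with endpoint values $\Phi(0) = \delta_{2j}$ and $\Phi(a_{2j+2}) = \delta_{2j+2}$ by the recursion, both strictly positive, so $\Phi(t) > 0$ throughout. For the upper bound, $\Phi(t) < \Phi(0) = \delta_{2j} \leq \alpha$ once $t \geq 1$; the inequality $\delta_{2j} \leq \alpha$ is strict for $j \geq 1$ and an equality at $j = 0$, so the restriction $t > 0$ imposed in the statement converts this into the strict bound $\Phi(t) < \alpha$. The analysis of $\Psi$ is symmetric: $\delta_{2l} > 0$ makes $\Psi$ strictly increasing from $\delta_{2l-1}$ to $\delta_{2l+1}$, both negative, so $\Psi(t) < 0$; and $\Psi(t) \geq \delta_{2l-1} > -\alpha$ by the key estimate $|\delta_{2l-1}| < \alpha$.

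The remaining ingredient, $|\delta_{2l-1}| < \alpha$ for $l \geq 1$, follows from $|\delta_{2l-1}| < 1/k_{2l}$ together with $k_{2l}\alpha > 1$; the latter is immediate from $k_{2l}\alpha = h_{2l} + \delta_{2l}$, $h_{2l} \geq 1$ (as $h_{2l} \geq h_2 = a_2 \geq 1$), and $\delta_{2l} > 0$. The genuine subtlety of the whole argument is the borderline equality $\delta_0 = \alpha$, which would place $\Phi(0)$ exactly on the boundary of the target interval $(0,\alpha)$; the restriction $t > 0$ in the even case is precisely what bypasses this degeneracy. Every remaining step is routine bookkeeping with the convergent recursion.
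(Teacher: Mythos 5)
Your proof is correct, and it takes a genuinely different route from the paper's. The paper proves the four equalities by carrying the equivalent inequalities
\begin{equation*}
(k_{2j}+tk_{2j+1}-1)\ga<h_{2j}+th_{2j+1}<(k_{2j}+tk_{2j+1})\ga,\qquad (k_{2l-1}+tk_{2l})\ga<h_{2l-1}+th_{2l}<(k_{2l-1}+tk_{2l}+1)\ga
\end{equation*}
forward by a simultaneous induction on the pair $(j,l)$, starting from $(0,1)$ and at each step deducing the bounds at level $r+1$ from those at level $r$. You instead repackage everything in $\gd_i=k_i\ga-h_i$, observe that both families reduce to checking $\Phi(t)=\gd_{2j}+t\gd_{2j+1}\in(0,\ga)$ and $\Psi(t)=\gd_{2l-1}+t\gd_{2l}\in(-\ga,0)$, note that $\Phi,\Psi$ are strictly monotone affine functions whose endpoint values are themselves $\gd$'s (via $\gd_{i+2}=\gd_i+a_{i+2}\gd_{i+1}$), and then close the argument with the classical determinant estimate $|\gd_i|<1/k_{i+1}$ together with $k_{2l}\ga>1$. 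This is non-inductive; the only ``propagation'' you need is the elementary monotonicity $\gd_0>\gd_2>\gd_4>\cdots>0$, and the one genuinely nontrivial input is isolated as the single inequality $|\gd_{2l-1}|<\ga$. I also checked the delicate points: the reduction of $f(n),g(n)$ to membership of $m\ga-n$ in $(0,\ga)$ or $(-\ga,0)$ is valid since $\ga$ is irrational (so $n/\ga$ is never an integer), the sign of $(-1)^i$ in $k_{i+1}\gd_i-k_i\gd_{i+1}=(-1)^i$ works out so that $|\gd_i|<1/k_{i+1}$ in both parities, and your handling of the degenerate boundary $\gd_0=\ga$ correctly explains why the first family is restricted to $t>0$. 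Your version is arguably cleaner, since it makes explicit which single quantitative fact ($|\gd_{2l-1}|<1/k_{2l}<\ga$) drives the two-sided bounds that the paper instead threads through its induction hypotheses.
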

\begin{proof}
To prove the theorem it suffices to prove the following inequalities. 
\equa{(k_{2j}+tk_{2j+1}-1)\ga<h_{2j}+th_{2j+1}&<(k_{2j}+tk_{2j+1})\ga,\\
	&0<t\leq a_{2j+2},j\in \mbb{N}\cup\{0\},t\in \mbb{N}\\
(k_{2l-1}+tk_{2l})\ga<h_{2l-1}+th_{2l}&<(k_{2l-1}+tk_{2l}+1)\ga,\\
&0\leq t \leq a_{2l+1},t,l\in \mbb{N}.}
This we prove by induction on $j,l$ simultaneously as follows. 

We observe that 
\equ{h_0=k_0-1=0,k_1\ga < h_1 \Ra (k_0+tk_1-1)\ga<h_0+th_1\text{ for all }t > 0.}
We also have for 
\equ{0 \leq t \leq a_2, \frac{h_0+th_1}{k_0+tk_1}\leq \frac{h_2}{k_2}<\ga \Ra h_0+th_1 < (k_0+tk_1)\ga.} So
\equ{(k_0+tk_1-1)\ga<h_0+th_1<(k_0+tk_1)\ga, 0<t\leq a_2.}
We have 
\equ{h_2<k_2\ga\text{ and }\frac 1{a_1+1}<\ga \Ra h_1 < (k_1+1)\ga.}
Hence \equ{\text{for all }t \geq 0, h_1+th_2<(k_1+tk_2+1)\ga.}
We also have 
\equ{\text{for }0\leq t \leq a_3, \ga<\frac{h_3}{k_3}\leq \frac{h_1+th_2}{k_1+tk_2} \Ra (k_1+tk_2)\ga < h_1+th_2.} 
So
\equ{(k_1+tk_2)\ga < h_1+th_2 < (k_1+tk_2+1)\ga, 0 \leq t \leq a_3.}
This proves the initial step of the induction for $j=0,l=1$. 

Now assume that the inequalities follow for $j=r,l=r+1$ for some $r\in \mbb{N}$. We prove for $j=r+1,l=r+2$.
We have 
\equ{k_{2r+3}\ga < h_{2r+3}\text{ and for }j=r,t=a_{2r+2},(k_{2r+2}-1)\ga < h_{2r+2}} 
which together imply 
\equ{(k_{2r+2}+tk_{2r+3}-1)\ga < h_{2r+2}+th_{2r+3}\text{ for }t\geq 0.}
We also have for 
\equa{0\leq t\leq a_{2r+4}, \frac{h_{2r+2}+th_{2r+3}}{k_{2r+2}+tk_{2r+3}} &\leq \frac{h_{2r+4}}{k_{2r+4}}<\ga \Ra\\
h_{2r+2}+th_{2r+3}&<(k_{2r+2}+tk_{2r+3})\ga.}
So
\equ{(k_{2r+2}+tk_{2r+3}-1)\ga < h_{2r+2}+th_{2r+3}<(k_{2r+2}+tk_{2r+3})\ga.}
We have \equ{h_{2r+4}<k_{2r+4}\ga\text{ and for }l=r+1,t=a_{2r+3}, h_{2r+3}<(k_{2r+3}+1)\ga} 
which together impy
\equ{h_{2r+3}+th_{2r+4}<(k_{2r+3}+tk_{2r+4}+1)\ga\text{ for all }t\geq 0.}
We also have for 
\equa{0\leq t\leq a_{2r+5}, \ga < \frac{h_{2r+5}}{k_{2r+5}} &\leq \frac{h_{2r+3}+th_{2r+4}}{k_{2r+3}+tk_{2r+4}} \Ra \\
	(k_{2r+3}+tk_{2r+4})\ga &< h_{2r+3}+th_{2r+4}.}
So
\equ{(k_{2r+3}+tk_{2r+4})\ga<h_{2r+3}+th_{2r+4}<(k_{2r+3}+tk_{2r+4}+1)\ga, 0\leq t\leq a_{2r+5}.}
This proves the induction step for $j=r+1,l=r+2$. 

Hence the theorem follows and the values of the sequences $f,g$ at the values of $n$ being the numerator of any primary or secondary convergent are known. 
\end{proof}
\begin{note}
\label{note:FracPart}
Now we make an important observation about the monotonic nature of the ceil or rounding up fractional parts $h_{*}-k_{*}\ga$.
The numerators of the secondary and primary convergents associated to the lower sequence $g$ satisfy the following monotonicity.
\equ{h_1<h_1+h_2<\cdots<h_1+a_3h_2=h_3<h_3+h_4<\cdots<h_3+a_5h_4=h_5<\cdots.}
The sequence of differences is given by \equ{h_2,h_2,\cdots,h_2,h_4,h_4,\cdots,h_4,\cdots}
where $h_{2i}$ appears $a_{2i+1}$ times for $i\geq 1$. This sequence is non-decreasing and diverges to infinity.
Now we apply $g$ to the above sequence to obtain the denominators of the secondary and primary convergents associated to the lower sequence $g$ which also satisfy the following monotonicity.
\equ{k_1<k_1+k_2<\cdots<k_1+a_3k_2=k_3<k_3+k_4<\cdots<k_3+a_5k_4=k_5<\cdots.}
The sequence of differences is given by \equ{k_2,k_2,\cdots,k_2,k_4,k_4,\cdots,k_4,\cdots}
where $k_{2i}$ appears $a_{2i+1}$ times for $i\geq 1$. This sequence is non-decreasing and diverges to infinity.
The ceil fractional parts satisfy
\equa{h_1-k_1\ga>(h_1+h_2)&-(k_1+k_2)\ga>\cdots>h_3-k_3\ga>\\
	&(h_3+h_4)-(k_3+k_4)\ga>\cdots>h_5-k_5\ga>\cdots.}

Similarly we make an observation on the monotonic nature of the floor or usual fractional parts $k_{*}\ga-h_{*}$.
The numerators of the secondary and primary convergents associated to the upper sequence $f$ satisfy the following monotonicity.
\equ{h_0<h_0+h_1<\cdots<h_0+a_2h_1=h_2<h_2+h_3<\cdots<h_2+a_4h_3=h_4<\cdots.}
The sequence of differences  is given by \equ{h_1,h_1,\cdots,h_1,h_3,h_3,\cdots,h_3,\cdots}
where $h_{2i-1}$ appears $a_{2i}$ times for $i\geq 1$. This sequence is non-decreasing and diverges to infinity.
Now we apply $f$ to the above sequence to obtain the denominators of the secondary and primary convergents associated to the upper sequence $f$ which also satisfy the following monotonicity.
\equ{k_0+k_1<\cdots<k_0+a_2k_1=k_2<k_2+k_3<\cdots<k_2+a_4k_3=k_4<\cdots.}
The sequence of differences after including $k_0$ in the beginning is given by 
\equ{k_1,k_1,\cdots,k_1,k_3,k_3,\cdots,k_3,\cdots}
where $k_{2i-1}$ appears $a_{2i}$ times for $i\geq 1$. This sequence is non-decreasing and diverges to infinity.
The floor fractional parts satisfy
\equa{k_0\ga>(k_0+k_1)\ga&-(h_0+h_1)>\cdots>k_2\ga-h_2>\\
	&(k_2+k_3)\ga-(h_2+h_3)>\cdots>k_4\ga-h_4>\cdots.}
\end{note}
Now we prove a useful lemma regarding fractions.
\begin{lemma}
\label{lemma:fracDetOne}
Let $\frac ab>\frac pq>\frac cd \geq 0$ be three fractions such that $ad-bc=1$. Then we have \equ{q>max(b,d).}
\end{lemma}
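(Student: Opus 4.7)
The plan is to reduce the strict inequalities between the three fractions to two positive integer quantities, then to exhibit $q$ as an explicit non-negative integer linear combination of those quantities with coefficients $b$ and $d$. Since fractions here have positive denominators (standard convention, and in any event we need $b,d\ge 1$ for $\frac{a}{b}, \frac{c}{d}$ to be ordinary fractions), I may assume $b,d,q\in\mathbb{N}$.

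First I would clear denominators in the two strict inequalities. From $\frac{a}{b}>\frac{p}{q}$ we get $aq-bp>0$, and since $aq-bp\in\mathbb{Z}$, this forces $aq-bp\ge 1$. Similarly from $\frac{p}{q}>\frac{c}{d}$ we get $dp-cq\ge 1$.

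Next I would compute the key telescoping identity
\[
d(aq-bp)+b(dp-cq) \;=\; adq-bdp+bdp-bcq \;=\; (ad-bc)q \;=\; q,
\]
where in the last step I use the hypothesis $ad-bc=1$. Combining this with the two lower bounds from the previous step yields
\[
q \;=\; d(aq-bp)+b(dp-cq) \;\ge\; d\cdot 1 + b\cdot 1 \;=\; b+d.
\]
Since $b,d\ge 1$, we have $b+d>\max(b,d)$, so $q>\max(b,d)$ as claimed.

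There is essentially no hard step here; the only thing to be careful about is the sign conventions when clearing denominators (both $aq-bp$ and $dp-cq$ must come out positive, which is why I chose the orderings as above), and the tacit assumption that $b,d\ge 1$, which is automatic for the denominators of the primary/secondary convergents to which this lemma will be applied.
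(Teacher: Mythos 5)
Your proof is correct, but it takes a genuinely different route from the paper's. The paper first notes $\frac{a}{b}-\frac{c}{d}=\frac{ad-bc}{bd}=\frac{1}{bd}$, so that $\frac{1}{bd}>\frac{a}{b}-\frac{p}{q}=\frac{aq-bp}{bq}$ with $aq-bp\geq 1$; it then argues by contradiction that $q\leq d$ is impossible, and symmetrically that $q\leq b$ is impossible, giving $q>\max(b,d)$ directly. You instead use the classical Farey/Stern--Brocot linear-combination identity $d(aq-bp)+b(dp-cq)=(ad-bc)q=q$, and bound both bracketed quantities below by $1$. Your route is shorter and in fact proves the stronger statement $q\geq b+d$, from which $q>\max(b,d)$ is immediate since $b,d\geq 1$; the paper's gap-comparison argument only yields the two separate inequalities $q>b$ and $q>d$. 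Both are standard arguments in the theory of continued fractions and Farey sequences; your version is the one more commonly seen in that literature, and it makes the role of the hypothesis $ad-bc=1$ (as opposed to merely $ad-bc>0$) completely transparent.
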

\begin{proof}
We have $\frac 1{bd}>\frac ab-\frac pq=\frac {aq-bp}{bq}$. If $aq-bp=1$ then $q>d$. If $aq-bp>1$ and $q\leq d$ then $\frac {aq-bp}{bq}\geq \frac {aq-bp}{bd}>\frac 1{bd}$ a contradiction.
Hence $q>d$.
We also have $\frac 1{bd}>\frac pq-\frac cd=\frac {pd-cq}{dq}$. If $pd-cq=1$ then $q>b$. If $pd-cq>1$ and $q\leq b$ then $\frac {pd-cq}{dq}\geq \frac {pd-cq}{bd}>\frac 1{bd}$ a contradiction.
Hence $q>b$. 
So the lemma follows.
\end{proof}
We prove the second theorem.
\begin{theorem}
\label{theorem:UpLowSeqDifferences}
Let $\ga \in [0,1]$ be an irrational. 
Let $\{f(n):n\in \mbb{N}\},\{g(n):n\in \mbb{N}\}$ be the upper and lower sequences of $\ga$. 
Consider the two sequences of lower and upper fractions of primary and secondary convergents respectively.
\equa{\bigg\{\frac {p_1}{q_1}&<\cdots<\frac {p_i}{q_i}<\cdots\mid i\in \mbb{N}\bigg\}=\\
&\bigg\{\frac {h_0}{k_0}<\frac {h_0+h_1}{k_0+k_1}<\cdots <\frac{h_2}{k_2}<\frac {h_2+h_3}{k_2+k_3}<\cdots <\frac{h_4}{k_4}<\frac {h_4+h_5}{k_4+k_5}<\cdots\bigg\}.}
\equa{\bigg\{\frac {r_1}{s_1}&>\cdots>\frac {r_i}{s_i}>\cdots\mid i\in \mbb{N}\bigg\}=\\
&\bigg\{\frac {h_1}{k_1}>\frac {h_1+h_2}{k_1+k_2}>\cdots >\frac{h_3}{k_3}>\frac {h_3+h_4}{k_3+k_4}>\cdots >\frac{h_5}{k_5}>\frac {h_5+h_6}{k_5+k_6}>\cdots\bigg\}.}
Given a lower fraction $\frac{n}{f(n)}\nin \{\frac {p_j}{q_j}\mid j\in \mbb{N}\}$ there exists a lower fraction $\frac {p_i}{q_i}$ such that 
\equ{n>p_i, f(n)>f(p_i)=q_i,f(n)\ga-n>q_i\ga-p_i.}
Given an upper fraction $\frac{n}{g(n)}\nin \{\frac {r_j}{s_j}\mid j\in \mbb{N}\}$ there exists an upper fraction $\frac {r_i}{s_i}$ such that 
\equ{n>r_i,g(n)>g(r_i)=s_i,n-g(n)\ga>r_i-s_i\ga.}
\end{theorem}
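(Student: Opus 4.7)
The plan is to treat the lower and upper cases in parallel, using the fact that any non-distinguished lower (resp.\ upper) fraction must be sandwiched strictly between two consecutive distinguished ones. For the lower case, since $\frac{p_j}{q_j}\nearrow\ga$ and $\frac{n}{f(n)}<\ga$, I would let $i$ be the smallest index with $\frac{n}{f(n)}<\frac{p_i}{q_i}$; this $i$ is at least $2$ because $\frac{p_1}{q_1}=\frac{h_0}{k_0}=0$ while $n\geq 1$. The hypothesis $\frac{n}{f(n)}\notin\{\frac{p_j}{q_j}\}$ then sharpens the failing inequality at index $i-1$ into the strict sandwich $\frac{p_{i-1}}{q_{i-1}}<\frac{n}{f(n)}<\frac{p_i}{q_i}$. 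For the upper case one first records that $\frac{n}{g(n)}\leq\frac{r_1}{s_1}=\frac{1}{a_1}$, a short consequence of $g(n)\geq a_1 n$; after that the same choice of smallest $i\geq 2$ with $\frac{r_i}{s_i}<\frac{n}{g(n)}$ produces the analogous sandwich $\frac{r_i}{s_i}<\frac{n}{g(n)}<\frac{r_{i-1}}{s_{i-1}}$.

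With the sandwich in hand, the next step is to feed it into Lemma~\ref{lemma:fracDetOne}. Because consecutive distinguished lower (resp.\ upper) fractions come from primary and secondary convergents, a block-wise computation that uses the identities $h_i k_{i+1}-k_i h_{i+1}=\pm 1$ yields the determinant relation $p_i q_{i-1}-q_i p_{i-1}=1$ (resp.\ $r_{i-1}s_i-s_{i-1}r_i=1$). The lemma then immediately gives $f(n)>q_i$ (resp.\ $g(n)>s_i$), which is the denominator inequality demanded by the theorem. The fractional-part inequality follows by rearranging the sandwich itself: from $\frac{n}{f(n)}<\frac{p_i}{q_i}<\ga$ one extracts $\frac{n-p_i}{f(n)-q_i}<\frac{p_i}{q_i}<\ga$, which rearranges to $f(n)\ga-n>q_i\ga-p_i$, and the identical manipulation starting from $\frac{n}{g(n)}>\frac{r_i}{s_i}>\ga$ proves $n-g(n)\ga>r_i-s_i\ga$.

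The delicate point, and the one I expect to be the main obstacle, is the strict inequality $n>p_i$ (together with its upper analogue $n>r_i$). Combining $\frac{n}{f(n)}>\frac{p_{i-1}}{q_{i-1}}$ with the integer estimate $f(n)\geq q_i+1$ and the determinant relation $p_i q_{i-1}=p_{i-1}q_i+1$ yields
\[
n>p_i+\frac{p_{i-1}-1}{q_{i-1}},
\]
which forces $n>p_i$ whenever $p_{i-1}\geq 1$. The single borderline case is $p_{i-1}=0$, which can only occur when $i=2$ because $p_1=h_0=0$ is the only vanishing numerator in the distinguished list; there the estimate yields only $n\geq p_i$, but $n=p_i$ would force $f(n)=f(p_i)=q_i$ by Theorem~\ref{theorem:UpLowSequence}, contradicting $f(n)>q_i$. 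The same dichotomy handles the upper sequence, where in fact $r_1=h_1=1\geq 1$ so the borderline case does not even arise. Assembling the denominator, numerator, and fractional-part inequalities for each of the two sequences completes the proof.
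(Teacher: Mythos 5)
Your proof is correct and follows essentially the same route as the paper's: sandwich the non-distinguished fraction strictly between two consecutive distinguished ones, verify the unit-determinant relation, feed it into Lemma~\ref{lemma:fracDetOne} to get the denominator inequality, and then deduce the numerator and fractional-part inequalities. The only minor deviation is that for $n>p_i$ the paper invokes strict monotonicity of $f$ together with $f(p_i)=q_i$ from Theorem~\ref{theorem:UpLowSequence}, which avoids your arithmetic borderline case at $p_{i-1}=0$; your handling of that borderline via Theorem~\ref{theorem:UpLowSequence} is nevertheless sound, and your observation that $\tfrac{n}{g(n)}\le\tfrac{h_1}{k_1}$ always holds is a small clean-up of the paper's (vacuous) case split.
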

\begin{proof}
Both the sequences $f,g$ are monotonically increasing and $\frac{h_i}{k_i}-\frac{h_{i+1}}{k_{i+1}}=\frac{(-1)^{i+1}}{k_ik_{i+1}}\lra 0$ as $i\lra \infty$.
Hence \equ{\us{i\lra \infty}{\lim} \frac{p_i}{q_i}=\us{i\lra \infty}{\lim} \frac{h_i}{k_i}= \us{i\lra \infty}{\lim} \frac{r_i}{s_i}=\ga.} 
Now $0<\frac n{f(n)} <\ga$. So there exist two consecutive lower fractions $\frac{p_{i-1}}{q_{i-1}},\frac{p_i}{q_i}$ such that \equ{\frac{p_{i-1}}{q_{i-1}}<\frac n{f(n)} <\frac {p_i}{q_i}<\ga.}
Using Lemma~\ref{lemma:fracDetOne} we conclude that $f(n)>max(q_i,q_{i-1})=q_i>q_{i-1}$ since $p_iq_{i-1}-q_ip_{i-1}$ $=1$. 
By monotonicity of $f$ we conclude that $n> max(p_i,p_{i-1})=p_i>p_{i-1}$.
Now we have \equ{f(n)\ga-n=f(n)(\ga-\frac n{f(n)})>f(n)(\ga-\frac {p_i}{q_i})>q_i(\ga-\frac {p_i}{q_i})=q_i\ga-p_i.}
Now we prove the other case.  Suppose  $\frac n{g(n)}>\frac {h_1}{k_1}=\frac 1{k_1}>\ga$. Since $n>1,g(n)>k_1$. Choose $\frac{r_i}{s_i}=\frac {h_1}{k_1}$ and we have 
\equ{n-g(n)\ga=g(n)(\frac n{g(n)}-\ga)>g(n)(\frac {h_1}{k_1}-\ga)>k_1(\frac {h_1}{k_1}-\ga)=h_1-k_1\ga.}
Suppose $\frac{h_1}{k_1}> \frac n{g(n)} > \ga$. Then there exist two consecutive upper fractions $\frac {r_{i-1}}{s_{i-1}},\frac {r_i}{s_i}$ such that 
\equ{\frac {r_{i-1}}{s_{i-1}}>\frac n{g(n)}>\frac {r_i}{s_i}>\ga.}
Again using Lemma~\ref{lemma:fracDetOne} we conclude that $g(n)>max(s_i,s_{i-1})=s_i>s_{i-1}$ since $s_ir_{i-1}-r_is_{i-1}=1$.
By monotonicity of $g$ we conclude that $n>max(r_i,r_{i-1})=r_i>r_{i-1}$.
Now we have \equ{n-g(n)\ga=g(n)(\frac n{g(n)}-\ga)>g(n)(\frac {r_i}{s_i}-\ga)>s_i(\frac {r_i}{s_i}-\ga)=r_i-s_i\ga.}
This proves the theorem.   
\end{proof}
\begin{note}
\label{note:MinFracPart}
In Theorem~\ref{theorem:UpLowSeqDifferences} if $\frac{n}{f(n)}=\frac{p_i}{q_i}$ then there exists a positive integer $k$ such that $n=kp_i,f(n)=kq_i$. 
So if $k>1$ the we have $n>p_i,f(n)>q_i,f(n)\ga-n>q_i\ga-p_i$. The other case is similar. So we conclude that minimal fractional parts occur exactly at the numerators
and denominators of primary and secondary convergents using Note~\ref{note:FracPart} in the sequences of lower and upper fractions of $\ga$ which is the content of the statement of Theorem~\ref{theorem:FracPart}.
\end{note}
\section{\bf{The proof of the main theorem}}
\label{sec:FactRectangles}
We begin this section with a theorem which is an observation on the rounding up or ceil fractional parts of the lower sequence and the usual fractional parts of the upper sequence.

\begin{theorem}
\label{theorem:FracPart}
Let $\ga\in [0,1]$ be an irrational. Let $f,g$ be the upper and lower sequences associated to $\ga$. Let $z_0=\ga$ and for $n\in \mbb{N}$ let $z_n=-n+f(n)\ga,y_n=n-g(n)\ga$.
Let $n_0=0,z_{n_0}=z_0=\ga,m_1=1,y_{m_1}=y_1=1-g(1)\ga$. Define two subsequences $z_{n_j},y_{m_j}$
with the property that 
\equa{z_{n_j}<z_{n_{j-1}}&=\min \{z_0,\cdots, z_{n_j-1}\} \text{ for }j\in \mbb{N},\\
y_{m_j}<y_{m_{j-1}}&=\min \{y_1,\cdots, y_{m_j-1}\} \text{ for }1<j\in \mbb{N}.}
Then the sequence \equa{\{n_0<n_1&<\cdots<n_j<\cdots\mid j\in \mbb{N}\cup\{0\}\}=\\
&\{h_0<h_0+h_1<\cdots<h_0+a_2h_1=h_2<h_2+h_3<\cdots<\\
&h_2+a_4h_3=h_4<h_4+h_5<\cdots\}} and the sequence
\equa{\{m_1<m_2&<\cdots<m_j<\cdots\mid j\in \mbb{N}\}=\\
&\{h_1<h_1+h_2<\cdots<h_1+a_3h_2=h_3<h_3+h_4<\cdots<\\
&h_3+a_5h_4=h_5<h_5+h_6<\cdots\}.}
\end{theorem}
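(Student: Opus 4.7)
The plan is to characterize the sequence $\{n_j\}$ (resp. $\{m_j\}$) of indices at which the quantity $z_n$ (resp. $y_n$) achieves a new minimum, by showing it coincides with the numerator sequence arising from the primary and secondary convergents associated to the upper sequence $f$ (resp. lower sequence $g$). The arguments for the two sequences are entirely parallel, so I will lay out the $z$-case in detail and then indicate the minor changes needed for the $y$-case.

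First, along the proposed sequence $h_0, h_0+h_1, \ldots, h_0+a_2h_1 = h_2, h_2+h_3, \ldots$, which is precisely the numerator sequence of the lower fractions $\frac{p_i}{q_i}$ from Theorem~\ref{theorem:UpLowSeqDifferences}, Note~\ref{note:FracPart} already establishes strict monotone decrease of the floor fractional parts $k_{*}\ga-h_{*}$. Since Theorem~\ref{theorem:UpLowSequence} gives $f(h_{2j}+th_{2j+1}) = k_{2j}+tk_{2j+1}$ for $0\leq t\leq a_{2j+2}$, we have $z_{h_{2j}+th_{2j+1}} = (k_{2j}+tk_{2j+1})\ga - (h_{2j}+th_{2j+1})$, so the values of $z$ along this sequence are exactly the decreasing floor parts. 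This shows that each candidate index is a new minimum \emph{provided} no intermediate index produces a smaller value.

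Second, to rule out intermediate indices, I apply Theorem~\ref{theorem:UpLowSeqDifferences} together with Note~\ref{note:MinFracPart}. Let $n\geq 1$ be any index not in the proposed list. If $\frac{n}{f(n)} \notin \{\frac{p_j}{q_j}\}$, the theorem directly produces some $p_i$ with $p_i < n$ and $f(n)\ga - n > q_i\ga - p_i$, that is, $z_n > z_{p_i}$. If instead $\frac{n}{f(n)} = \frac{p_i}{q_i}$, then $n = kp_i$ with $k \geq 2$ (since by hypothesis $n\neq p_i$), and by Note~\ref{note:MinFracPart} the same strict inequality $z_n > z_{p_i}$ holds. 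In either case $p_i$ is some $n_{j'}$ with $n_{j'} < n$, so $n$ cannot produce a new minimum. Combined with the previous paragraph this proves that the new-minimum indices of $\{z_n\}$ are exactly the $n_j$ listed.

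The argument for $\{m_j\}$ is identical in structure. The candidate sequence $h_1, h_1+h_2,\ldots$ is the numerator sequence of the upper fractions $\frac{r_i}{s_i}$; Note~\ref{note:FracPart} gives strict decrease of the ceil fractional parts $h_{*}-k_{*}\ga$ along it; Theorem~\ref{theorem:UpLowSequence} identifies $y_{h_{2l-1}+th_{2l}}$ with these fractional parts via $g(h_{2l-1}+th_{2l}) = k_{2l-1}+tk_{2l}$; and Theorem~\ref{theorem:UpLowSeqDifferences} together with Note~\ref{note:MinFracPart} rules out every other $n$. The only delicate point in the whole proof, and the main obstacle, is the bookkeeping in the second step: making sure the two cases of Note~\ref{note:MinFracPart} (rational $\frac{n}{f(n)}$ equal to a convergent versus not) jointly cover every $n$ that is not itself a primary or secondary convergent numerator. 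Once that dichotomy is exhausted, the theorem follows immediately from the earlier results.
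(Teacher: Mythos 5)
Your proposal is correct and follows essentially the same route as the paper: identify the $z$- and $y$-values along the candidate index sequences with the floor and ceil fractional parts via Theorem~\ref{theorem:UpLowSequence} and Note~\ref{note:FracPart}, then use Theorem~\ref{theorem:UpLowSeqDifferences} together with Note~\ref{note:MinFracPart} to rule out a new minimum at any $n$ that is not a numerator of a primary or secondary convergent. The paper's proof is a one-line citation of exactly these three ingredients; your write-up simply makes the reasoning explicit.
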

\begin{proof}
This theorem follows by applying Theorem~\ref{theorem:UpLowSeqDifferences} and Notes~\ref{note:FracPart},~\ref{note:MinFracPart} which together imply
that the lesser fractional parts in the sequence occur exactly at numerators of primary and secondary convergents of the lower and upper fractions for
the upper and lower sequences respectively.
\end{proof}
Now we prove main Theorem~\ref{theorem:FactRectangles}.
\begin{proof}
Let $\ga=\frac {\log p_1}{\log p_2}\in \mbb{R}\bs \mbb{Q}$.
Consider a point \equ{(k_{2i-1}+tk_{2i}+r,s)\in \rectangle A^t_iB^t_iC^t_iD^t_i} and its next number \equ{(r,h_{2i-1}+th_{2i}+s)\in \rectangle \ti{A^t_i}\ti{B^t_i}\ti{C^t_i}\ti{D^t_i}}
for any \equ{0\leq t<a_{2i+1},0\leq r <k_{2i},0\leq s< h_{2i}.} Now suppose there exists an integer $p_1^bp_2^a$ such that 
\equ{p_1^{k_{2i-1}+tk_{2i}+r}p_2^{s}<p_1^bp_2^a<p_1^{r}p_2^{h_{2i-1}+th_{2i}+s}.}
Then we arrive at a contradiction as follows.

The following sequence of inequalities hold 
\equ{\big(k_{2i-1}+tk_{2i}+r\big)\ga+s<b\ga+a<r\ga+h_{2i-1}+th_{2i}+s<\big(k_{2i-1}+tk_{2i}+r+1\big)\ga+s}
because $\ga=\frac{\log p_1}{\log p_2}$.
If $b=r$ then there exist two integers $a-s,h_{2i-1}+th_{2i}$ in between \equ{\big(k_{2i-1}+tk_{2i}\big)\ga,\big(k_{2i-1}+tk_{2i}+1\big)\ga} which is a contradiction.

If $b<r$ then we must have $a>s$ so that $a-s>0$. Hence
\equ{\big(k_{2i-1}+tk_{2i}+r-b\big)\ga<a-s<h_{2i-1}+th_{2i}+(r-b)\ga<\big(k_{2i-1}+tk_{2i}+r-b+1\big)\ga.}
So let $0<z<\ga$ be the positive rounding up ceil fractional part such that 
\equ{\big(k_{2i-1}+tk_{2i}+r-b\big)\ga+z=a-s.}
Now suppose $k_{2i-1}+tk_{2i}+r-b<k_{2i-1}+(t+1)k_{2i}$. Then the fractional part $z$ has to satisfy 
\equ{z\geq h_{2i-1}+th_{2i}-\big(k_{2i-1}+tk_{2i}\big)\ga}
because minimal fractional parts occur exactly at the numerators and denominators of primary and secondary convergents.
On the other hand \equa{z=(a-s)&-\big(k_{2i-1}+tk_{2i}+r-b\big)\ga<\\
	&h_{2i-1}+th_{2i}+(r-b)\ga-\big(k_{2i-1}+tk_{2i}+r-b\big)\ga=\\
	&h_{2i-1}+th_{2i}-\big(k_{2i-1}+tk_{2i}\big)\ga}
which is a contradiction. 
So we have \equ{k_{2i-1}+tk_{2i}+r-b\geq k_{2i-1}+(t+1)k_{2i}\Ra r-b \geq k_{2i} \Ra r\geq k_{2i}+b\Ra r\geq k_{2i}}
which is again a contradiction to $0\leq r\leq k_{2i}-1$.

Now consider the case $b>r$ so that $b-r>0$. We have 
\equ{\big(k_{2i-1}+tk_{2i}\big)\ga+s-a<(b-r)\ga<h_{2i-1}+th_{2i}+s-a<\big(k_{2i-1}+tk_{2i}+1)\ga+s-a.}

Let $0<y<\ga$ be the rounding up ceil fractional part such that 
\equ{(b-r)\ga+y=h_{2i-1}+th_{2i}+s-a.}
Now suppose $h_{2i-1}+th_{2i}+s-a<h_{2i-1}+(t+1)h_{2i}$. Then the fractional part $y$ has to satisfy 
\equ{y\geq h_{2i-1}+th_{2i}-\big(k_{2i-1}+tk_{2i}\big)\ga}
because minimal fractional parts occur exactly at the numerators and denominators of primary and secondary convergents.
On the other hand \equa{y&=h_{2i-1}+th_{2i}+s-a-(b-r)\ga<\\
	&h_{2i-1}+th_{2i}+s-a-\big((k_{2i-1}+tk_{2i})\ga+s-a\big)=\\
&h_{2i-1}+th_{2i}-\big(k_{2i-1}+tk_{2i}\big)\ga}
which is a contradiction. So we have 
\equ{h_{2i-1}+th_{2i}+s-a \geq h_{2i-1}+(t+1)h_{2i} \Ra s-a\geq h_{2i} \Ra s\geq h_{2i}+a \Ra s\geq h_{2i}}
which is again a contradiction to $0\leq s\leq h_{2i}-1$.

This proves that the next number of $p_1^{k_{2i-1}+tk_{2i}+r}p_2^{s}$ is $p_1^{r}p_2^{h_{2i-1}+th_{2i}+s}$ for 
\equ{0\leq t<a_{2i+1},0\leq r <k_{2i},0\leq s<h_{2i}.} 

Now we consider the second set of rectangles. Consider a point 
\equ{(r,h_{2i}+th_{2i+1}+s)\in \rectangle P^t_iQ^t_iR^t_iS^t_i} and its next number \equ{(r+k_{2i}+tk_{2i+1},s)\in \rectangle \ti{P^t_i}\ti{Q^t_i}\ti{R^t_i}\ti{S^t_i}}
for any \equ{0\leq t<a_{2i+2},0\leq r<k_{2i+1},0\leq s< h_{2i+1}.} Now suppose there exists an integer $p_1^bp_2^a$ such that 
\equ{p_1^{r}p_2^{h_{2i}+th_{2i+1}+s}<p_1^bp_2^a<p_1^{r+k_{2i}+tk_{2i+1}}p_2^{s}.}
Then we arrive at a contradiction as follows in a similar manner.

The following sequence of inequalities hold  
\equ{\big(k_{2i}+tk_{2i+1}+r-1\big)\ga+s<r\ga+h_{2i}+th_{2i+1}+s<b\ga+a<\big(k_{2i}+tk_{2i+1}+r\big)\ga+s}
because $\ga=\frac{\log p_1}{\log p_2}$. This implies
\equ{\big(k_{2i}+tk_{2i+1}+r-b-1\big)\ga<h_{2i}+th_{2i+1}+(r-b)\ga<a-s<\big(k_{2i}+tk_{2i+1}+r-b\big)\ga.}
If $a=s$ then we observe that $\big(k_{2i}+tk_{2i+1}+r-b\big)\ga\geq \ga$ and $\big(k_{2i}+tk_{2i+1}+r-b-1\big)\ga\leq -\ga$ which is a contradiction.
If $a>s$ so that $a-s>0$ then let $0<x<\ga$ be the positive fractional part such that 
\equ{(a-s)+x=\big(k_{2i}+tk_{2i+1}+r-b\big)\ga.}
Now suppose $\big(k_{2i}+tk_{2i+1}+r-b\big)<k_{2i}+(t+1)k_{2i+1}$. Then the fractional part $x$ has to satisfy
\equ{x\geq (k_{2i}+tk_{2i+1})\ga-(h_{2i}+th_{2i+1})}
because minimal fractional parts occur exactly at the numerators and denominators of primary and secondary convergents.
On the other hand
\equa{x&=\big(k_{2i}+tk_{2i+1}+r-b\big)\ga-(a-s)<\\
	&\big(k_{2i}+tk_{2i+1}+r-b\big)\ga-\big(h_{2i}+th_{2i+1}+(r-b)\ga\big)=\\
&(k_{2i}+tk_{2i+1})\ga-(h_{2i}+th_{2i+1})}
which is a contradiction. So we have
\equa{\big(k_{2i}+tk_{2i+1}+r-b\big)&\geq k_{2i}+(t+1)k_{2i+1} \Ra\\
	 r-b&\geq k_{2i+1}\Ra r\geq k_{2i+1}+b\Ra r\geq k_{2i+1}}
which is a contradiction to $0\leq r\leq k_{2i+1}-1$.

Now suppose $a<s$ so that $s-a>0$. Then we have 
\equ{\big(k_{2i}+tk_{2i+1}-1\big)\ga+s-a<h_{2i}+th_{2i+1}+s-a<(b-r)\ga<\big(k_{2i}+tk_{2i+1}\big)\ga+s-a.}
So $b-r>0$ and let $0<u<\ga$ be the positive fractional part such that
\equ{h_{2i}+th_{2i+1}+s-a+u=(b-r)\ga.}
Now suppose $h_{2i}+th_{2i+1}+s-a<h_{2i}+(t+1)h_{2i+1}$. Then the fractional part $u$ has to satisfy
\equ{u\geq (k_{2i}+tk_{2i+1})\ga-(h_{2i}+th_{2i+1})}
because minimal fractional parts occur exactly at the numerators and denominators of primary and secondary convergents.
On the other hand
\equa{	u&=(b-r)\ga-(h_{2i}+th_{2i+1}+s-a)<\\
		&\big(k_{2i}+tk_{2i+1}\big)\ga+s-a-(h_{2i}+th_{2i+1}+s-a)=\\
		&(k_{2i}+tk_{2i+1})\ga-(h_{2i}+th_{2i+1})}
which is a contradiction. So we have
\equa{h_{2i}+th_{2i+1}+s-a&>h_{2i}+(t+1)h_{2i+1}\Ra\\ 
	s-a&\geq h_{2i+1}\Ra s\geq h_{2i+1}+a \Ra s\geq h_{2i+1}}
which is again a contradiction to $0\leq s\leq h_{2i+1}-1$.

This proves that the next number of $p_1^{r}p_2^{h_{2i}+th_{2i+1}+s}$ is $p_1^{r+k_{2i}+tk_{2i+1}}p_2^{s}$ for 
\equ{0\leq t<a_{2i+2},0\leq r <k_{2i+1},0\leq s<h_{2i+1}.} 

Now we prove that the rectangles cover the grid $(\mbb{N}\cup\{0\})\times(\mbb{N}\cup\{0\})$. For this we need to prove that given 
$(x,y)\in (\mbb{N}\cup\{0\})\times(\mbb{N}\cup\{0\})$ either there exists $j \geq 0$ such that 
\equ{h_{2j}+th_{2j+1} \leq y < h_{2j}+(t+1)h_{2j+1}, 0 \leq x < k_{2j+1}\text{ for some }0\leq t<a_{2j+2}} 
or there exists $i\geq 1$ such that 
\equ{k_{2i-1}+\ti{t}k_{2i}\leq x <k_{2i-1}+(\ti{t}+1)k_{2i},0\leq y<h_{2i} \text{ for some }0\leq \ti{t}<a_{2i+1}} 

Now there always exist $j\geq 0,0\leq t<a_{2j+2}$ such that \equ{h_{2j}+th_{2j+1} \leq y < h_{2j}+(t+1)h_{2j+1}.}
If $0 \leq x < k_{2j+1}$ then we are done. Otherwise $x\geq k_{2j+1}$. Hence there exist $i\geq j+1, 0\leq \ti{t}<a_{2i+1}$ such that 
\equ{k_{2i-1}+\ti{t}k_{2i}\leq x <k_{2i-1}+(\ti{t}+1)k_{2i}.} Now we have \equ{0\leq h_{2j}+th_{2j+1} \leq y < h_{2j}+(t+1)h_{2j+1}\leq h_{2j+2}\leq h_{2i} \Ra 0\leq y<h_{2i}.}
So the rectangles cover the grid. The rest of the proof for the grid with origin deleted is similar. 

This completes the proof of Theorem~\ref{theorem:FactRectangles}.
\end{proof}
We mention the following corollary of Theorem~\ref{theorem:FactRectangles} with a very brief proof as it is straight forward.
\begin{cor}
\label{cor:ArbLargeInt}
Let $S$ be a multiplicatively closed set with two generators $1<p_1<p_2$ such that $\frac{\log(p_1)}{\log(p_2)}$ is irrational.  Then there exist arbitrarily large
gap integer intervals of $S$ with end points in $S$.
\end{cor}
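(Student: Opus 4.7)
The plan is to extract from Theorem~\ref{theorem:FactRectangles} an explicit family of consecutive pairs in $S$ whose gaps tend to infinity. A convenient choice is the corner $D^0_i = (k_{2i-1}, h_{2i}-1)$ in the first rectangle family (taking $t=0$) for each $i \geq 1$; this represents the element $s_k = p_1^{k_{2i-1}} p_2^{h_{2i}-1}\in S$, and by Theorem~\ref{theorem:FactRectangles} its successor is $\ti{D^0_i} = (0, h_{2i-1}+h_{2i}-1)$, representing $s_{k+1} = p_2^{h_{2i-1}+h_{2i}-1}$.

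I would then compute the gap by factoring out the common powers:
\equa{s_{k+1} - s_k &= p_2^{h_{2i-1}+h_{2i}-1} - p_1^{k_{2i-1}} p_2^{h_{2i}-1}\\
&= p_2^{h_{2i}-1}\bigl(p_2^{h_{2i-1}} - p_1^{k_{2i-1}}\bigr).}
Since $\frac{h_{2i-1}}{k_{2i-1}} > \ga = \frac{\log p_1}{\log p_2}$ for every odd-indexed primary convergent, one has $p_2^{h_{2i-1}} > p_1^{k_{2i-1}}$; both are positive integers, so their difference is at least $1$. Hence $s_{k+1} - s_k \geq p_2^{h_{2i}-1}$. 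Because the sequence $\{h_{2i}\}$ is strictly increasing (the recursion $h_{2i+2} = a_{2i+2}h_{2i+1} + h_{2i}$ with $h_{2i+1} \geq 1$ forces $h_{2i+2} > h_{2i}$) and unbounded, the lower bound $p_2^{h_{2i}-1}$ diverges as $i\to\infty$.

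There is essentially no obstacle here: the main theorem supplies the consecutive pair outright, and the positivity of $p_2^{h_{2i-1}} - p_1^{k_{2i-1}}$ follows from the standard placement of odd primary convergents above $\ga$. The only sanity check is that $D^0_i$ is a legitimate grid point, which amounts to $h_{2i} \geq 1$; this holds for all $i \geq 1$ since $h_2 = a_2 \geq 1$ and the recursion preserves positivity. Given any $M$, choosing $i$ large enough that $p_2^{h_{2i}-1} > M$ then produces a pair of consecutive elements of $S$ whose gap exceeds $M$, which is exactly the content of Corollary~\ref{cor:ArbLargeInt}.
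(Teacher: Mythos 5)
Your argument is correct and essentially reproduces the paper's (admittedly terse) strategy: use Theorem~\ref{theorem:FactRectangles} to exhibit a consecutive pair in $S$ from a rectangle and its translate, factor out a common power of $p_2$ with exponent $h_{2i}-1$, and note that the remaining integer factor $p_2^{h_{2i-1}}-p_1^{k_{2i-1}}$ is positive (since $\frac{h_{2i-1}}{k_{2i-1}}>\ga$) and hence at least $1$. The paper takes the corner with the largest values of $r$ and $s$ whereas you take $D^0_i$ (so $r=0$, $s=h_{2i}-1$), but the mechanism is identical, and your write-up usefully makes explicit the factoring step and the divergence $h_{2i}\to\infty$ that the paper leaves implicit.
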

\begin{proof}
To obtain arbitrarily large gap intervals, we apply Theorem~\ref{theorem:FactRectangles} for the largest values of $r\in \{k_{2i}-1,k_{2i+1}-1\}, s\in\{h_{2i}-1,h_{2i+1}-1\}$ 
which tend to infinity as $i$ tends to infinity.
\end{proof}

\end{document}